\newtheorem{thm}{Th\'eor\`eme}[section]
\newtheorem{prop}[thm]{Proposition}
\newtheorem{lemma}[thm]{Lemme}
\newtheorem{question}[thm]{Question}
\theoremstyle{definition}
\theoremstyle{remark}
\newtheorem{rmk}[thm]{Remarque}
\numberwithin{equation}{section}
\newcommand{\F}{\mathbb F}
\newcommand{\G}{\mathbb G}
\renewcommand{\P}{\mathbb P}
\newcommand{\Spec}{\operatorname{Spec}}
\newcommand{\A}{\mathbb A}
\newcommand{\mc}[1]{\mathcal{#1}}
\newcommand{\set}[1]{\left\{#1\right\}}
\renewcommand{\phi}{\varphi}
\newcommand{\on}[1]{\operatorname{#1}}
\title[Pathologies du groupe des classes de R-\'equivalence]{Pathologies du groupe des classes de R-\'equivalence d'un groupe alg\'ebrique lin\'eaire}
\author{Federico Scavia}
\address{Department of Mathematics\\
	University of California\\
	Los Angeles, CA 90095-1555 \\ USA}
\email{scavia@math.ucla.edu}
\subjclass[2020]{14L10; 14M20, 20G15}
\begin{document}

	\maketitle

	\begin{abstract}
		Soient $k_0$ un corps de caract\'eristique $p>0$ et $k=k_0(t)$, o\`u $t$ est transcendant sur $k_0$. On donne un exemple d'un $k$-groupe lisse connexe unipotent $G$ tel qu'il existe une extension s\'eparable finie $F/k$ avec $G(F)/R$ non-commutatif.
	\end{abstract}
	
	\selectlanguage{english}	
	\begin{abstract}
		Let $k_0$ be a field of characteristic $p>0$ and $k=k_0(t)$, where $t$ is transcendental over $k_0$. We give an example of a smooth connected unipotent $k$-group $G$ such that $G(F)/R$ is non-commutative for some finite separable field extension $F$ containing $k$.
	\end{abstract}

	\selectlanguage{french}

	\section{Introduction}
	Soient $k$ un corps et $X$ une $k$-vari\'et\'e, c'est-\`a-dire un $k$-sch\'ema s\'epar\'e de type fini. Deux points $x,y\in X(k)$ sont dits $R$-li\'es s'il existe une $k$-application rationnelle $f:\P^1_k\dashrightarrow X$ d\'efinie en $0$ et $1$ et telle que $f(0)=x$ et $f(1)=y$, et il sont dits $R$-\'equivalents s'il existe des points  $z_1,\dots,z_n\in X(k)$ tels que $z_1=x$, $z_n=y$ et $z_i$ et $z_{i+1}$ sont $R$-li\'es pour tout $i=1,\dots,n-1$. On note par $X(k)/R$ l'ensemble des classes de $R$-\'equivalence; si $K/k$ est une extension de corps, on pose $X(K)/R:=X_K(K)/R$. La notion de $R$-\'equivalence a \'et\'e introduite par Yu. Manin dans \cite{manin1968cubic1} et \cite{manin1969cubic3}; voir aussi le chapitre 1 de \cite{manin1986cubic}. 
	
	Soit $G$ un $k$-groupe, c'est-\`a-dire un $k$-sch\'ema en groupes de type fini, et soit $R(G,e)\subset G(k)$ la classe de $R$-\'equivalence de l'identit\'e $e\in G(k)$. Alors $R(G,e)$ est un sous-groupe normal de $G(k)$ et $G(k)/R=G(k)/R(G,e)$, donc $G(k)/R$ a une structure de groupe compatible avec celle de $G(k)$. La question ouverte suivante est bien connue.
	
	\begin{question}\label{question1}
		Soient $k$ un corps et $G$ un $k$-groupe affine lisse et connexe. Le groupe $G(k)/R$ est-il commutatif?
	\end{question}
	
	Dans cette g\'en\'eralit\'e, la question \ref{question1} a \'et\'e pos\'ee par V. Voskresenski\u{\i} \cite[Complements, \S 1, p. 199]{voskresenskii1977algebraic}. Elle a \'et\'e surtout consid\'er\'ee   pour $G$ r\'eductif.  Une r\'eponse positive est connue dans un certain nombre de cas. 
	
	\begin{itemize}
		\item Si $k$ est infini et $G$ est stablement $k$-rationnel alors le groupe $G(k)/R$ est trivial; voir \cite[Remark p. 180]{voskresenskii2011algebraic}. (L'hypoth\`ese $\on{car}(k)=0$ est inessentielle.)
		\item Supposons $k$ infini et $G$ r\'eductif et quasi-d\'eploy\'e sur $k$. Soit $S$ un $k$-tore d\'eploy\'e maximal de $G$ et $T:=C_G(S)$ le centralisateur de $S$, qui est un $k$-tore maximal de $G$ par \cite[Proposition 20.6]{borel1991linear}. D'apr\`es \cite[\S 18.1]{voskresenskii2011algebraic} ou \cite[Lemme 1.7]{gille2004specialisation} on a $G(k)/R\simeq T(k)/R$, donc $G(k)/R$ est commutatif.
		\item Si $\on{car}(k)=0$ et $G=\on{SL}(A)$, o\`u $A$ est une $k$-alg\`ebre simple centrale, le groupe $G(k)/R$ est commutatif d'apr\`es un th\'eor\`eme de Voskresenski\u{\i} \cite{voskresenskii1977reduced}, qui montre que l'on a $G(k)/R=G(k)/[A^{\times},A^{\times}]=\on{SK}_1(A)$. Le r\'esultat de Voskresenski\u{\i} s'appuie sur l'\'egalit\'e $\on{SK}_1(A)=\on{SK}_1(A\otimes_kk(t))$ \'etablie par V. Platonov \cite{platonov1975tannaka}. Une r\'ef\'erence utile sur ce sujet est \cite[\S 18.2]{voskresenskii2011algebraic}.
		
		\item Dans \cite{merkurjev1996r-equivalence}, A. Merkurjev a d\'etermin\'e explicitement le groupe $G(k)/R$ et montr\'e qu'il est commutatif pour tout corps $k$ de caract\'eristique diff\'erente de $2$ et tout $k$-groupe $G$ adjoint semi-simple classique. 
		
		\item Si $G$ est un $k$-groupe sp\'ecial unitaire, le groupe $G(k)/R$ est connu et commutatif d'apr\`es V. Chernousov et A. Merkurjev \cite{chernousov1998r-equivalence}.
		
		\item Si $k$ est infini et parfait, la commutativit\'e de $G(k)/R$ pour $G=\on{Spin}(q)$, o\`u $q$ est une forme quadratique non-d\'eg\'en\'er\'ee, a \'et\'e d\'emontr\'ee par Chernousov et Merkurjev \cite{chernousov2001r-equivalence}.
	\end{itemize}

	Si $k$ est infini, $X$ est une $k$-vari\'et\'e et $k(u)/k$ est une extension pure de degr\'e de transcendance $1$, alors la fl\`eche naturelle $f:X(k)/R\to X(k(u))/R$ est bijective; voir la remarque \ref{homotopie}. On note $k(\!(u)\!)$ le compl\'et\'e de $k(u)$ par rapport \`a la valuation $u$-adique. Ph. Gille \cite[Corollaire 0.3]{gille2004specialisation} a d\'emontr\'e que, si $\on{car}(k)\neq 2$ et $G$ est un $k$-groupe r\'eductif, alors l'homomorphisme naturel $G(k)/R\to G(k(\!(u)\!))/R$ est bijectif.	
	
	\begin{question}\label{question3}
		Soient $k$ un corps, $G$ un $k$-groupe affine lisse et connexe, $k(u)/k$ 
		une extension pure de degr\'e de transcendance $1$ et $k(\!(u)\!)$ le compl\'et\'e de $k(u)$ par rapport \`a la valuation $u$-adique. L'application naturelle $G(k)/R\to G(k(\!(u)\!))/R$ est-elle un isomorphisme?
	\end{question}	
	
	
	Le but de cette note est de donner des exemples qui montrent que les questions \ref{question1} et \ref{question3} ont une r\'eponse n\'egative dans le cas non-r\'eductif.
	
	\begin{thm}\label{mainthm}
		Soient $p$ un nombre premier, $k_0$ un corps de caract\'eristique $p$ et $k=k_0(t)$, o\`u $t$ est transcendant sur $k_0$. Il existe un $k$-groupe unipotent lisse connexe non-commutatif $G$ de dimension $2$ tel que le groupe $G(k)$ est trivial et $G(K)/R=G(K)$ pour toute extension s\'eparable $K/k$. En particulier:
		\begin{enumerate}
			\item il existe une extension s\'eparable finie $F$ de $k$ telle que le groupe $G(F)/R$ est non-commutatif,
			\item si $k(u)/k$ est une extension pure de degr\'e de transcendance $1$ et $k(\!(u)\!)$ est le compl\'et\'e de $k(u)$ par rapport \`a la valuation $u$-adique, l'homomorphisme de groupes  $G(k)/R\to G(k(\!(u)\!))/R$ n'est pas surjectif.
		\end{enumerate}
	\end{thm}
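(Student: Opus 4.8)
The plan is to produce $G$ as an explicit non-commutative central extension
\[
1 \longrightarrow Z \longrightarrow G \longrightarrow V \longrightarrow 1 ,
\]
with $Z = [G,G]$ and $V = G/Z$ wound unipotent $k$-groups manufactured from the imperfection of $k=k_0(t)$ (using, say, forms of $\G_a$ of the shape $\{(x,y):y^{p^{n}}=x+a_1x^{p}+\dots+a_mx^{p^{m}}\}$ with the $a_i$ built from $t\notin k^{p}$, so that the slice $x=0$ contains only the origin), and with the extension governed by a bi-additive polynomial cocycle $f\colon V\times V\to Z$ whose antisymmetrisation $(v,w)\mapsto f(v,w)-f(w,v)$ is a nonzero $p$-polynomial (for instance assembled from $v^{p}w-vw^{p}$). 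The first block of the argument is then formal: $G$ is a smooth connected unipotent $k$-group; it is non-commutative because its commutator morphism is exactly that nonzero antisymmetrisation, so $G_{\bar k}$ is non-commutative and hence $G(F)$ is a non-commutative abstract group for some finite \emph{separable} extension $F/k$ (pull back a non-commuting pair of points from the dense open $\{[v,w]\neq e\}\subseteq G\times G$, which has a $k^{\mathrm{sep}}$-point by smoothness); and $G$ is $k$-wound. Woundness --- the absence of any $\G_a$-subgroup --- is the first place requiring attention, and is checked from the woundness of $Z$ and $V$ together with a genericity property of the extension class ruling out a new copy of $\G_a$. Since $k$-woundness survives every separable extension of the base field, $G_K$ is $K$-wound for all separable $K/k$.

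The heart of the matter is $G(K)/R=G(K)$ for every separable $K/k$, i.e. $R(G_K,e)=\{e\}$: no two distinct $K$-points of $G$ are $R$-equivalent. This does \emph{not} follow from woundness alone --- which is precisely why $G$ must be engineered. A one-dimensional wound unipotent group already carries the rational point $e$, so its smooth compactification is $\P^1_k$; it is therefore $k$-rational, and for it $R$-equivalence is \emph{maximal} (the quotient is trivial) rather than discrete. So $G$ must be arranged so that it --- and each $G_K$, $K/k$ separable --- contains \emph{no} $K$-rational curve. Granting this, the assertion is short: a $K$-rational map $\P^1_K\dashrightarrow G$ defined at $0$ and $1$ extends, by the valuative criterion, to a $K$-morphism $\P^1_K\to\overline G$ into a fixed smooth compactification that lands in $G$ at $0$ and $1$; with no rational curve available it must be constant, so $f(0)=f(1)$. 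I expect the proof that $G$ has no rational curve to be the main obstacle. One would fix an explicit $\overline G$, describe its boundary divisor, and show that every non-constant map $\P^1\to\overline G$ is forced entirely into the boundary, analysing such a map place by place on $\P^1$ and exploiting $t\notin k^{p}$ and the explicit defining equations. This is the same mechanism as the classical fact that wound groups admit no non-constant map from $\A^1$, but one must now also control the behaviour at the poles --- exactly the point at which the one-dimensional building blocks fail and a correctly chosen $G$ does not.

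It remains to pin down $G(k)$ and read off the corollaries. That $G(k)$ is trivial is a delicate valuation-and-descent computation: realising $G\hookrightarrow\G_a^{N}$ through its additive equations and running a Newton-polygon analysis over the places of $k=k_0(t)$ --- using decisively that $t$ fails to be a $p$-th power at $t=0$ and $t=\infty$ --- one forces every coordinate of a would-be $k$-point to vanish, the equations being tuned (shapes $y^{p^{n}}=x+\dots$ rather than $y^{p^{n}}-y=x+\dots$, so that the constant solutions $\F_{q}$ do not appear) precisely so that no spurious $k$-point survives. The corollaries then follow at once. For (1): taking $F/k$ finite separable with $G(F)$ non-commutative (such $F$ exists, as above), the theorem gives $G(F)/R=G(F)$, which is non-commutative, so Question~\ref{question1} has a negative answer outside the reductive case. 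For (2): $G(k)$ trivial gives $G(k)/R$ trivial; but $k(\!(u)\!)/k$ is separable and $G$ is smooth of positive dimension, so $G(k(\!(u)\!))\supseteq G(k[[u]])$ already contains elements $\neq e$ (e.g. points reducing to $e$ in a first-order formal neighbourhood, which do not lift to $G(k)$ but lift over $k[[u]]$ by smoothness), whence $G(k(\!(u)\!))/R=G(k(\!(u)\!))\neq\{e\}$ and the specialisation map $G(k)/R\to G(k(\!(u)\!))/R$ is not surjective.
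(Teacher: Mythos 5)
Your overall architecture (non-commutative central extension of two wound one-dimensional unipotent groups, then deduce (1) from density of $G(k_{\mathrm{sep}})$ and (2) from triviality of $G(k)$ plus non-triviality of $G(k(\!(u)\!))$ and separability of $k(\!(u)\!)/k$) matches the paper, and your arguments for (1) and (2) are essentially correct. But the heart of the theorem --- that $G(K)/R=G(K)$ for \emph{every} separable $K/k$ --- is exactly the step you do not prove: you reduce it to ``$G_K$ contains no rational curve for all separable $K/k$'', declare this ``the main obstacle'', and offer only a programme (explicit smooth compactification, boundary divisor, place-by-place Newton-polygon analysis). That programme is not carried out, and it is also harder than necessary: a wound group need not admit a smooth compactification over an imperfect field, and the compactification detour is superfluous since $R$-equivalence is defined directly by rational maps from $\P^1_K$. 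The paper's missing ingredient is a d\'evissage through the extension $1\to G_1\to G\to G_2\to 1$: since $k$ is infinite and $G_1(k)$, $G_2(k)$ are finite, $G_1$ and $G_2$ are not $k$-unirational, and by the lemma \ref{achet} (non-unirationality of a smooth connected \emph{commutative} group is preserved under arbitrary separable extension, proved via a norm map for $R_{K/k}$ plus a spreading-out argument) the groups $(G_1)_K$, $(G_2)_K$ are never $K$-unirational; then the lemma \ref{no-rat-map} kills any rational map $\P^1_K\dashrightarrow G_K$ in two lines (its image in $(G_2)_K$ is constant, so it factors through $(G_1)_K$, so it is constant). Without some substitute for this lemma your proposal does not establish the central claim. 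Incidentally, your motivating aside that a one-dimensional wound group is automatically $k$-rational with trivial $R$-equivalence is false in general (Rosenlicht's example has a single rational point over an infinite field), even though your conclusion that woundness alone is insufficient is correct.

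A second concrete gap is the case $p=2$ of the construction. Your cocycle is to be ``assembled from $v^pw-vw^p$'', but in characteristic $2$ that expression is symmetric, so the resulting extension is commutative; the same problem afflicts Endo's example, which is why the paper needs a separate argument for $p=2$ (twisting the cocycle by $\zeta\in\F_4\setminus\F_2$ over $\F_4(t)$ and descending to $\F_2(t)$ via an explicit isomorphism of extensions, \`a la Gabber). Since the theorem is asserted for every prime $p$, your sketch as written does not produce the required $G$ when $p=2$. Finally, the triviality of $G(k)$ is only promised (``a delicate valuation-and-descent computation''); the paper proves it by a short derivation-in-$t$ argument (lemma \ref{finite-group}) applied to the explicit equations of $G_1$ and $G_2$, and this needs to be done for whatever equations you ultimately choose.
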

	Les questions \ref{question1} et \ref{question3} ont une r\'eponse n\'egative pour $G$ comme dans le th\'eor\`eme \ref{mainthm}. Si on pose $k_0=\F_p$ dans le th\'eor\`eme \ref{mainthm}, on obtient des exemples d\'efinis sur des corps globaux.
	
	\medskip
	
	On esquisse la preuve du th\'eor\`eme \ref{mainthm}. Soit $G$ une extension de $k$-groupes
	\[1\to G_1\to G\to G_2\to 1\]
	o\`u $G_1$ et $G_2$ sont unipotents lisses connexes de dimension $1$ et les groupes $G_1(k)$ et $G_2(k)$ sont finis. Comme l'on rappelle dans la section \ref{construction}, l'existence de tels groupes $G_1$ et $G_2$  remonte \`a M. Rosenlicht \cite[p. 46]{rosenlicht1957some} et celle de telles extensions $G$  est due \`a S. Endo \cite[Example 5.10]{kambayashi1974unipotent}	si $p>2$.  
	Une modification de la construction de Endo nous permet d'obtenir des exemples aussi pour $p=2$. Dans tous ces exemples, les groupes finis $G_1(k)$, $G_2(k)$ et $G(k)$ sont m\^eme triviaux. 
	
	Comme $k$ est infini, $G_1$ et $G_2$ ne sont pas $k$-unirationnels et donc, pour toute extension s\'eparable $K/k$, les $K$-groupes  $(G_1)_K$ et $(G_2)_K$ ne sont pas $K$-unirationnels; voir le lemme \ref{achet}. Un argument direct (lemme \ref{no-rat-map}) montre alors que toute application $K$-rationnel\-le $\P^1_K\dashrightarrow G_K$ est constante, et donc $G(K)/R=G(K)$.

	\section*{Remerciements}
	Je remercie Jean-Louis Colliot-Th\'el\`ene pour m'avoir pos\'e les questions \ref{question1} et \ref{question3}, pour ses commentaires utiles sur ce travail et pour les nombreuses conversations math\'ematiques que nous avons eues en 2021. Je remercie Jean-Pierre Serre pour des remarques utiles sur ce texte. Je remercie l'Institut des Hautes \'Etudes Scientifiques pour son hospitalit\'e pendant l'automne 2021.

	\section{Unirationalit\'e et extensions s\'eparables}\label{sec-sep}
	
	Par d\'efinition, une extension de corps $K/k$ (non n\'ecessairement alg\'ebrique) est dite s\'eparable si l'anneau $K\otimes_kk'$ est r\'eduit pour toute extension $k'/k$. L'extension $K/k$ est s\'eparable si est seulement si $K\otimes_kk'$ est r\'eduit pour toute extension finie  de corps $k'/k$. Par exemple, si $k=k_0(t)$, o\`u $t$ est transcendant sur $k_0$, alors l'extension  $k_0(\!(t)\!)/k_0(t)$ est s\'eparable.
	
	Soient $C$ un $k$-groupe commutatif et $K/k$ une extension galoisienne finie. On a une application norme $\nu:R_{K/k}(C_K) \to C$, qui est un homomorphisme de $k$-groupes fid\`element plat. La construction de $\nu$ est sans doute bien connue, mais comme on n'a pas trouv\'e de r\'ef\'erence, on la rappelle ci-dessous.
	
	Soit $\Gamma:=\on{Gal}(K/k)$. Par descente galoisienne, il suffit de construire un homomorphisme $\Gamma$-\'equivariant de $K$-groupes $R_{K/k}(C_K)_K \to C_K$. On a un isomorphisme $\Gamma$-\'equivariant  de $k$-alg\`ebres $K\otimes_kK\simeq K^\Gamma$. Pour toute $K$-alg\`ebre $A$ on a alors une identification fonctorielle $\Gamma$-\'equivariante
	\[R_{K/k}(C_K)_K(A)=C(A\otimes_kK)=C(A\otimes_K K^{\Gamma})=\prod_{\sigma\in \Gamma} C(A\otimes_{K,\sigma}K),\]
	o\`u pour tout $\sigma\in \Gamma$ l'homomorphisme $K\to K$ utilis\'e pour d\'efinir $A\otimes_{K,\sigma}K$ est $\sigma$. Pour tout $\sigma\in \Gamma$, $\sigma^{-1}:K\to K$ est un homomorphisme de $K$-alg\`ebres, o\`u le domaine de $\sigma^{-1}$ est une $K$-alg\`ebre via $\sigma$ et son codomaine est une $K$-alg\`ebre via l'identit\'e de $K$. On tensorise par $A$ et on obtient un homomorphisme de $K$-alg\`ebres $A\otimes_{K,\sigma}K\to A\otimes_{K,\on{id}}K=A$, donc un homomorphisme de groupes $C(A\otimes_{K,\sigma}K)\to C(A)$. Comme $C$ est commutatif, le produit de ces homomorphismes
	\[\prod_{\sigma\in \Gamma}C(A\otimes_{K,\sigma}K)\to C(A)\]
	est encore un homomorphisme de groupes, et on peut v\'erifier qu'il est $\Gamma$-\'equivariant et fonctoriel en $A$. Ceci d\'efinit $\nu$. Par construction, l'homomorphisme $\nu_K$ s'identifie \`a l'homomorphisme de multiplication $(C_K)^{\Gamma}\to C_K$, donc $\nu$ est fid\`element plat.
	
	\begin{lemma}\label{achet}
		Soient $k$ un corps, $C$ un $k$-groupe lisse connexe et commutatif et $K/k$ une extension s\'eparable. Alors $C$ est $k$-unirationnel si et seulement si $C_K$ est $K$-unirationnel.
	\end{lemma}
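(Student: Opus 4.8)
The statement is an "if and only if". One direction is formal: if $C$ is $k$-unirational, there is a dominant $k$-rational map $\P^n_k \dashrightarrow C$, and base-changing to $K$ gives a dominant $K$-rational map $\P^n_K \dashrightarrow C_K$ (using that $K/k$ is separable, so $\P^n_K$ is integral and the base change of a dominant map stays dominant), whence $C_K$ is $K$-unirational. So the content is the converse: assuming $C_K$ is $K$-unirational, show $C$ is $k$-unirational.

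The idea for the hard direction is a descent/norm argument, which is exactly why the norm map $\nu\colon R_{K/k}(C_K)\to C$ was constructed just above. First I would reduce to the case where $K/k$ is a \emph{finite} Galois extension. Indeed, unirationality of $C_K$ means there is a dominant $K$-rational map $\P^N_K\dashrightarrow C_K$; this map is defined over a finitely generated sub-$k$-algebra of $K$, hence over a subfield $K'\subseteq K$ with $K'/k$ finitely generated. Since $K/k$ is separable, $K'/k$ is separable and finitely generated, so $K'$ is the function field of a smooth integral $k$-variety $V$; the generic fibre being $K'$-unirational spreads out to show $C_{k(W)}$ is $k(W)$-unirational for some dense open $W\subseteq V$, but $k(W)/k$ is a pure transcendental extension up to a finite separable piece (by a standard argument: a smooth $k$-variety is generically a finite separable cover of an affine space — here separability of $K'/k$ is what gives the separating transcendence basis). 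Over a purely transcendental extension $k(u_1,\dots,u_r)/k$, unirationality descends trivially since $\P^N_{k(u)}$ and $\P^{N+r}_k$ have the same function field over $k$; so we are left with the case $K/k$ finite separable, and then replacing $K$ by its Galois closure (and using that unirationality over a larger field implies it over a subfield only in the easy direction — actually we need the other way, so we keep $K$ Galois and remember we want to descend from $K$ down to $k$), we may assume $K/k$ is finite Galois with group $\Gamma$.

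Now suppose $K/k$ is finite Galois and $C_K$ is $K$-unirational, say via a dominant $K$-rational map $f\colon \P^N_K\dashrightarrow C_K$. By adjunction/Weil restriction this corresponds to a dominant $k$-rational map $g\colon \P^N_K \dashrightarrow R_{K/k}(C_K)$, where on the source I regard $\P^N_K$ as the $k$-variety $R_{K/k}(\P^N_K)$... more precisely: the $K$-map $f$ gives, by the universal property of $R_{K/k}$, a $k$-map $\widetilde f\colon W \dashrightarrow R_{K/k}(C_K)$ where $W=R_{K/k}(\P^N_K)$ is a smooth $k$-rational variety (a Weil restriction of projective space along a separable extension is $k$-rational — one checks this by taking a $k$-basis of $K$ and writing coordinates), and $\widetilde f$ is dominant because $f$ is. Composing with the faithfully flat norm homomorphism $\nu\colon R_{K/k}(C_K)\to C$ yields a dominant $k$-rational map $\nu\circ\widetilde f\colon W\dashrightarrow C$. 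Since $W$ is $k$-rational, precomposing with a dominant rational map $\P^M_k\dashrightarrow W$ exhibits $C$ as $k$-unirational. (Dominance of $\nu\circ\widetilde f$ uses that $\nu$ is faithfully flat, hence surjective and sends the dominant image to a dense set; one should check the composite is defined on a dense open, which is automatic for a dominant map followed by a morphism.)

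**Main obstacle.** The routine-looking but genuinely delicate point is the reduction from an arbitrary separable extension $K/k$ to a finite Galois one: one must produce the separating transcendence basis and handle the "spreading out" of a rational map over a finitely generated subextension, carefully tracking that everything stays dominant and that the finite piece is separable. The second delicate point is verifying that $R_{K/k}(\P^N_K)$ is $k$-rational and that $\widetilde f$ is dominant; the latter is the crux, since $\nu$ being faithfully plat is exactly what is needed to keep $\nu\circ\widetilde f$ dominant, and this is presumably the reason the authors took the trouble to construct $\nu$ explicitly. Once these geometric facts are in place, the argument closes formally.
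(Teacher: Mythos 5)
Your argument is correct, and its core coincides with the paper's: the finite Galois case is handled by Weil restriction of the dominant map followed by the norm homomorphism $\nu$, with faithful flatness (surjectivity) of $\nu$ keeping the composite dominant. Where you genuinely differ is in reducing a general separable extension $K/k$ to that case. The paper first replaces $k$ and $K$ by their separable closures (using the Galois case), then spreads the dominant map out over a smooth $k$-algebra $A\subset K$ and specializes at a $k$-point of a dense open of $\Spec(A)$ where the fibre map remains dominant, using constructibility of the dominance locus and the existence of rational points of smooth varieties over a separably closed field. You instead note that the witnessing map is defined over a finitely generated subextension $K'/k$, which is separable and hence (MacLane) admits a separating transcendence basis, so that $K'$ is finite separable over $k(u_1,\dots,u_r)$; you descend the finite separable piece by the Galois closure plus the norm argument over the base $k(u_1,\dots,u_r)$, and the purely transcendental piece via the identification of $k(\P^N_{k(u)})$ with $k(\P^{N+r}_k)$. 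Both routes work; yours trades the constructibility/specialization step for MacLane plus the easy purely transcendental descent, and avoids passing to the separable closure. Three small points to clean up: the sentence about spreading out to a dense open $W\subseteq V$ is vacuous in your route, since $k(W)=K'$ for every such $W$ and no spreading out is needed; the two descents must be applied in the order finite-separable first (over $k(u_1,\dots,u_r)$), then purely transcendental, which your phrasing momentarily reverses; and the dominance of the Weil-restricted map is precisely where separability is used (via $R_{K/k}(X)_{\bar k}\simeq \prod_{\sigma} X^{\sigma}$), automatic here because the extension is Galois, but worth stating explicitly, as the paper does.
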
	
	
	\begin{proof}
		Voir \cite[Theorem 2.3]{achet2019unirational}. On donne ici une d\'emonstration qui n'utilise pas le groupe de Picard rigidifi\'e. Il suffit de montrer que si $C_K$ est $K$-unirationnel, alors $C$ est $k$-unirationnel.
		
		Supposons d'abord que $K/k$ est alg\'ebrique et galoisienne.  L'argument qui suit nous a \'et\'e sugg\'er\'e par Jean-Louis Colliot-Th\'el\`ene. On se r\'eduit imm\'ediatement au cas o\`u $K/k$ est galoisienne finie de degr\'e $d\geq 1$. Il existe un ouvert dense $U\subset \A^n_K$ et un $K$-morphisme dominant
		$f:U \to C_K$. Alors $R_{K/k}(U)$ est un ouvert dense de $R_{K/k}(\A^n_K)\cong \A^{dn}_k$ et, comme $K/k$ est s\'eparable, le $k$-morphisme $R_{K/k}(f)$ est dominant. Comme le $k$-groupe $C$ est commutatif et l'extension $K/k$ est galoisienne finie, l'application norme $\nu:R_{K/k}(C_K) \to C$ est d\'efinie et fid\`element plate. On conclut que le morphisme compos\'e $\nu\circ R_{K/k}(f)$ est dominant, et donc que $C$ est $k$-unirationnel.
		
		On d\'eduira le cas g\'en\'eral par un argument d'\'etalement standard; cf. \cite[Lemma 2.2]{achet2019unirational}. Soit $K/k$ une extension s\'eparable arbitraire. D'apr\`es le cas pr\'ec\'edent, on peut remplacer $k$ et $K$ par leurs cl\^{o}tures s\'eparables et donc supposer que $k$ est s\'eparablement clos. Comme $K/k$ est limite directe de sous-extensions $K'/k$ de type fini, on peut supposer $K/k$ de type fini. Si $U\subset \A^n_K$ est un ouvert dense et $\phi:U\to C_K$ est un morphisme dominant, on peut trouver une $k$-alg\`ebre lisse $A\subset K$, un ouvert dense $\mc{U}\subset \A^n_A$ tel que $\mc{U}_K=U$ et un morphisme dominant 
		de $A$-sch\'emas $\psi:\mc{U}\to C_A$ tel que $\psi_K=\phi$. Soit $\Sigma\subset \Spec(A)$ l'ensemble des points $P\in \Spec(A)$ tels que $\psi_P:\mc{U}_P\to C_{k(P)}$ est dominante. Alors $\Sigma$ est constructible \cite[Proposition 9.6.1(ii)]{ega4} et contient le point g\'en\'erique de $\Spec(A)$, donc il contient un ouvert dense $V\subset \Spec(A)$. Comme $A$ est lisse sur $k$ et $k$ est s\'eparablement clos, $V(k)\neq\emptyset$. Si $P\in V(k)$, $\mc{U}_P$ est un ouvert dense de $\A^n_k$ et $\psi_P:\mc{U}_P\to C$ est le morphisme dominant cherch\'e.
	\end{proof}	
	
	\begin{lemma}\label{no-rat-map}
		Soit $k$ un corps. Supposons donn\'ee une suite exacte courte de $k$-groupes \[1\to G_1\to  G\xrightarrow{\pi} G_2\to 1,\] o\`u les $k$-groupes $G_1$ et $G_2$ sont  lisses affines connexes de dimension $1$ et ne sont pas $k$-unirationnels. Si $K/k$ est une extension s\'eparable, alors toute $K$-application rationnelle $\P^1_K\dashrightarrow G_K$ est constante. En particulier $G(K)/R=G(K)$.
	\end{lemma}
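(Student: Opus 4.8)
The plan is to reduce to the case $K=k$ and then, after one translation, force $f$ to land first in $G_2$ and then in $G_1$ --- curves which, being non-unirational, admit no non-constant $k$-rational map from $\P^1_k$. First I would reduce to $K=k$. Base-changing the exact sequence gives $1\to (G_1)_K\to G_K\to (G_2)_K\to 1$, where $(G_1)_K$ and $(G_2)_K$ are again smooth affine connected of dimension one. A smooth connected group scheme of dimension one over a field is commutative --- this may be checked after base change to an algebraic closure, where it becomes $\G_{\on{a}}$ or $\G_{\on{m}}$ --- so Lemma~\ref{achet} applies to $G_1$ and $G_2$: as $G_i$ is not $k$-unirational and $K/k$ is separable, $(G_i)_K$ is not $K$-unirational. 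Hence it suffices to prove the statement for $K=k$, namely: if $1\to G_1\to G\xrightarrow{\pi}G_2\to 1$ is exact with $G_1,G_2$ smooth affine connected of dimension one and not $k$-unirational, then every $k$-rational map $\P^1_k\dashrightarrow G$ is constant. Note that such a $k$ is automatically infinite, since over a perfect field a smooth connected affine group of dimension one is a split unipotent group or a torus, hence rational, hence unirational.

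The key elementary observation is a dichotomy. Since $G_1$ and $G_2$ are smooth, connected and have a rational point, they are geometrically integral, and they have dimension one; therefore any $k$-rational map $\P^1_k\dashrightarrow G_i$ is either constant or dominant. Indeed, a morphism from a dense open $U\subseteq\P^1_k$ to the integral curve $G_i$ has image whose closure is $\cl{\{\xi\}}$, with $\xi$ the image of the generic point of $U$: either $\xi$ is a closed point, and the morphism is constant, or $\xi$ is the generic point of $G_i$, and the morphism is dominant. A dominant rational map $\P^1_k\dashrightarrow G_i$ would exhibit $G_i$ as $k$-unirational, contrary to hypothesis; so every $k$-rational map $\P^1_k\dashrightarrow G_1$ and every $k$-rational map $\P^1_k\dashrightarrow G_2$ is constant.

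Now let $f\colon\P^1_k\dashrightarrow G$ be a $k$-rational map, defined on a dense open $U$. Since $k$ is infinite, $U(k)\neq\emptyset$; fix $a\in U(k)$, put $c:=f(a)\in G(k)$, and let $h\colon\P^1_k\dashrightarrow G$ be the rational map obtained by multiplying $f$ by the constant map with value $c^{-1}$, so that $h$ is defined on $U$ and $h(a)=e$. The composite $\pi\circ h\colon\P^1_k\dashrightarrow G_2$ is constant by the dichotomy, with value $\pi(h(a))=e$; since $U$ is reduced and $e$ is a $k$-rational point, $h$ factors through $\ker\pi=G_1$, yielding a $k$-rational map $\P^1_k\dashrightarrow G_1$ that is again constant by the dichotomy, with value $e$. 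Therefore $f$ --- which is $h$ multiplied by the constant map $c$ --- is the constant map with value $c=f(a)$. Applying this over $K$ shows that every $K$-rational map $\P^1_K\dashrightarrow G_K$ is constant, so no two distinct points of $G(K)$ are $R$-linked; hence $R(G_K,e)=\{e\}$ and $G(K)/R=G(K)$.

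The step requiring the most care is the reduction to $K=k$: one must invoke Lemma~\ref{achet} (legitimate because $G_1,G_2$ are commutative in dimension one and $K/k$ is separable) and observe that the hypotheses force $k$ to be infinite, so that $f$ is defined at a rational point. After that, the constant-or-dominant dichotomy and the translation trick are routine.
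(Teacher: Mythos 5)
Your proof is correct and follows essentially the same route as the paper: use Lemma~\ref{achet} (via commutativity of the one-dimensional groups) to get non-unirationality of $(G_1)_K$ and $(G_2)_K$, translate so the identity is in the image, conclude that the projection to $G_2$ is constant, factor through $G_1$, and conclude constancy. Your explicit ``constant or dominant'' dichotomy and the observation that the hypotheses force $k$ to be infinite are just spelled-out versions of steps the paper leaves implicit.
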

	
	\begin{proof}
		Soit $\phi:\P^1_K\dashrightarrow G_K$ une 
		$K$-application rationnelle; on veut montrer que $\phi$ est un morphisme constant. Comme $K$ est infini, il existe $g\in G(K)$ dans l'image de $\phi$. En composant $\phi$ avec la multiplication par $g^{-1}$ on peut alors supposer que l'\'el\'ement neutre $1_G\in G(K)$ est dans l'image de $\phi$. 
		
		Comme $G_1$ et $G_2$ sont connexes lisses et de dimension $1$, ils sont commutatifs (on peut le v\'erifier sur la cl\^oture alg\'ebrique de $k$). D'apr\`es le lemme \ref{achet}, $(G_1)_K$ et $(G_2)_K$ ne sont pas $K$-unirationnels. On d\'eduit que $\pi_K\circ \phi$ est constant et donc, comme $1_G$ est dans l'image de $\phi$, que $\pi_K\circ \phi=1_{G_2}$, c'est-\`a-dire que $\phi$ factorise par $(G_1)_K$. Comme $(G_1)_K$ n'est pas $K$-unirationnel, on conclut que $\phi$ est constant.
	\end{proof}

	\section{L'exemple}\label{construction}
	
	Le but de cette section est de prouver la proposition suivante.
	
	\begin{prop}\label{exemples}
		Soient $k_0$ un corps de caract\'eristique $p>0$ et $k=k_0(t)$, o\`u $t$ est transcendant sur $k_0$. Il existe une extension centrale de $k$-groupes
		\begin{equation}\label{suite}
			1\to G_1\to G\to G_2\to 1,
		\end{equation}
		o\`u les $k$-groupes $G_1$ et $G_2$ sont lisses connexes unipotents de dimension $1$, $G_1(k)$ et $G_2(k)$ sont triviaux, et le $k$-groupe $G$ est non-commutatif.
	\end{prop}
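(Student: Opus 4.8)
Le plan est de construire $G$ de mani\`ere explicite, comme une extension de type Heisenberg de $G_2$ par $G_1$, en suivant la construction de Endo pour $p\geq 3$ et en la modifiant pour $p=2$. On commencera par rappeler la construction de Rosenlicht de $k$-groupes lisses connexes unipotents de dimension $1$ sur $k=k_0(t)$: pour un polyn\^ome additif convenable, le $k$-sous-groupe de $\G_{\mathrm a}^2$ d\'efini par une \'equation du type $y^p-y=tx^p$ (ou une variante de ce genre, si l'on veut disposer de deux groupes distincts) est une $k$-forme de $\G_{\mathrm a}$, donc un $k$-groupe lisse connexe unipotent de dimension $1$, et un argument direct sur la valuation $t$-adique de $k_0(t)$ montre que son groupe de points rationnels est trivial. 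On posera $G_1$ et $G_2$ \'egaux \`a deux tels groupes.

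\'Etant donn\'e un $k$-morphisme bi-additif $\beta\colon G_2\times_k G_2\to G_1$ (additif en chacune des deux variables), on munira le $k$-sch\'ema $G:=G_2\times_k G_1$ de la loi $(a,b)\cdot(a',b')=(a+a',\,b+b'+\beta(a,a'))$. La bi-additivit\'e de $\beta$ entra\^ine l'associativit\'e; l'\'el\'ement neutre est $(0,0)$, on a $(a,b)^{-1}=(-a,-b+\beta(a,a))$, et $G$ est ainsi un $k$-groupe. L'application $b\mapsto(0,b)$ identifie $G_1$ \`a un $k$-sous-groupe ferm\'e central de $G$, de quotient $G_2$, d'o\`u une extension centrale comme dans \eqref{suite}. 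Comme $G$ s'identifie, en tant que $k$-sch\'ema, \`a $G_2\times_k G_1$, le $k$-groupe $G$ est lisse, connexe, de dimension $2$ et unipotent (comme extension de groupes unipotents), et $G(k)=G_2(k)\times G_1(k)$ en tant qu'ensemble, donc r\'eduit \`a l'\'el\'ement neutre. Enfin, un calcul direct donne $[(a,b),(a',b')]=(0,\,\beta(a,a')-\beta(a',a))$, de sorte que $G$ est non commutatif si et seulement si $\beta$ n'est pas sym\'etrique, c'est-\`a-dire $\beta\neq\beta^{\mathrm{op}}$, o\`u l'on a pos\'e $\beta^{\mathrm{op}}(a,a'):=\beta(a',a)$. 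Tout se ram\`ene donc \`a exhiber un $k$-morphisme bi-additif non sym\'etrique $\beta\colon G_2\times_k G_2\to G_1$.

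Sur $\cl k$, o\`u $G_1$ et $G_2$ deviennent $\G_{\mathrm a}$, de tels accouplements abondent: toute application $(x,y)\mapsto\sum_{i,j}c_{ij}\,x^{p^i}y^{p^j}$ est bi-additive, et il suffit de prendre la famille $(c_{ij})$ non sym\'etrique. La difficult\'e est de r\'ealiser un tel $\beta$ sur $k$ en respectant les formes $G_1$ et $G_2$: concr\`etement, de l'\'ecrire \`a l'aide des fonctions coordonn\'ees de $G_i\subset\G_{\mathrm a}^2$ de fa\c con que son image soit contenue dans le lieu de $\G_{\mathrm a}^2$ d\'efini par l'\'equation de $G_1$. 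Pour $p\geq 3$, c'est pr\'ecis\'ement ce que fait Endo \cite[Example 5.10]{kambayashi1974unipotent}; on peut d'ailleurs all\'eger l'argument, car en caract\'eristique impaire il suffit d'exhiber un accouplement bi-additif altern\'e non nul $c\colon G_2\times_k G_2\to G_1$ d\'efini sur $k$: alors $\beta:=\tfrac12\,c$ convient, puisque $\beta-\beta^{\mathrm{op}}=c\neq 0$, ce qui dispense de contr\^oler directement la non-sym\'etrie.

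Le point le plus d\'elicat est le cas $p=2$, o\`u ce raccourci ne fonctionne plus: on ne peut diviser par $2$, et, de toute fa\c con, en caract\'eristique $2$ tout accouplement bi-additif altern\'e est sym\'etrique, donc ne produit aucun $G$ non commutatif. On proc\'edera en modifiant la construction de Endo: on ajustera les \'equations d\'efinissant $G_1$ et $G_2$, puis on \'ecrira un $\beta$ explicite (du type $x\,y^2$ dans des coordonn\'ees bien choisies) dont on v\'erifiera directement, par un calcul en caract\'eristique $2$, qu'il est un $k$-morphisme bi-additif \`a valeurs dans $G_1$ et que $\beta+\beta^{\mathrm{op}}\neq 0$ (en caract\'eristique $2$, ceci \'equivaut \`a $\beta\neq\beta^{\mathrm{op}}$, donc \`a la non-commutativit\'e de $G$). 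Sur $\cl k$ un tel $\beta$ est imm\'ediat \`a \'ecrire, mais la condition que $\beta$ prenne ses valeurs dans $G_1\subset\G_{\mathrm a}^2$ impose des relations entre les coefficients des mon\^omes $x^{2^i}y^{2^j}$, et c'est exactement l\`a que la recette valable pour $p\geq 3$ cesse d'op\'erer; il faudra donc d\'eterminer le couple $(G_1,G_2)$ et le $\beta$ ad\'equats. On prendra enfin garde de v\'erifier que $G_1(k)$ et $G_2(k)$ sont triviaux (et non pas seulement finis) pour les groupes effectivement choisis sur $k_0(t)$, au moyen de la valuation $t$-adique.
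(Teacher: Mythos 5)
Votre cadre g\'en\'eral (extension centrale de type Heisenberg associ\'ee \`a un accouplement bi-additif $\beta:G_2\times G_2\to G_1$, non-commutativit\'e \'equivalente \`a la non-sym\'etrie de $\beta$) est exactement celui du texte, et pour $p\geq 3$ votre renvoi \`a l'exemple d'Endo co\"{\i}ncide en substance avec ce que fait l'article; votre raccourci par un accouplement altern\'e $c$ et $\beta:=\tfrac12 c$ n'\'economise d'ailleurs rien, puisque toute la difficult\'e est de construire $c$ d\'efini sur $k$ et \`a valeurs dans $G_1$, ce qui est pr\'ecis\'ement le contenu de l'exemple d'Endo. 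Le v\'eritable probl\`eme est le cas $p=2$, qui chez vous reste \`a l'\'etat d'annonce: vous dites qu'il faudra ajuster les \'equations de $G_1$ et $G_2$ et \'ecrire un $\beta$ explicite (du type $xy^2$) bi-additif, \`a valeurs dans $G_1$ et non sym\'etrique, mais vous n'exhibez ni les groupes, ni l'accouplement, ni aucune v\'erification, et rien ne garantit qu'une telle construction directe sur $k$ existe. C'est exactement l\`a que l'article change de m\'ethode: l'accouplement d'Endo devient sym\'etrique en caract\'eristique $2$, et au lieu de chercher un substitut d\'efini sur $k$, on le tord par une racine cubique primitive $\zeta\in\F_4$, ce qui donne un accouplement $h_\zeta$ non sym\'etrique et une extension non commutative $G_\zeta$ sur $k'=\F_4(t)$ seulement; on fait ensuite descendre $G_\zeta$ \`a $\F_2(t)$ au moyen d'une donn\'ee de descente explicite $(g_1,g_2)\mapsto(g_1+f(g_2),g_2)$ reposant sur l'identit\'e \eqref{h=df}, puis on passe \`a $k_0(t)$ par changement de base. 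En particulier, l'extension obtenue sur $k$ n'est pas pr\'esent\'ee comme (et n'a aucune raison d'\^etre) une extension du type produit que vous postulez, donn\'ee par un cocycle bi-additif d\'efini sur $k$. En l'\'etat, le cas $p=2$ --- qui est pr\'ecis\'ement l'apport de la proposition par rapport \`a Endo --- n'est pas d\'emontr\'e.

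Deux points secondaires. L'\'equation $y^p-y=tx^p$ que vous sugg\'erez pour les groupes de Rosenlicht ne convient pas telle quelle: les solutions $x=0$, $y\in\F_p$ donnent d\'ej\`a un sous-groupe non trivial de points rationnels, alors que la proposition exige $G_1(k)$ et $G_2(k)$ triviaux; il faut des \'equations du type $X=atX^{p^m}+bY^{p^n}$ comme dans le lemme \ref{finite-group}, dont la preuve proc\`ede par d\'erivation par rapport \`a $t$ (et non par un simple argument de valuation $t$-adique, qui ne suffit pas \`a exclure toutes les solutions polynomiales). Vous signalez certes qu'il faudra v\'erifier la trivialit\'e pour les groupes effectivement choisis, mais cette v\'erification contraint s\'erieusement les \og variantes \fg{} admissibles, d'autant que les groupes retenus doivent aussi admettre l'accouplement $h$.
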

	
	Soient $G_1$ et $G_2$ deux groupes commutatifs sur un corps $k$. On \'ecrit  les op\'erations de groupe de $G_1$ et $G_2$ additivement. Soit $h:G_2\times G_2\to G_1$ un morphisme bi-additif, c'est-\`a-dire
	\[h(g_2+g_2',g_2'')=h(g_2,g_2'')+h(g_2',g_2''),\qquad h(g_2,g_2'+g_2'')=h(g_2,g_2')+h(g_2,g_2'')\]
	pour toute $k$-alg\`ebre $A$ et tous $g_2,g_2',g_2''\in G_2(A)$.
	Alors 
	\begin{equation}\label{zero-g2}
		h(0,g_2)=h(g_2,0)=0	
	\end{equation} pour toute $k$-alg\`ebre $A$ et tout $g_2\in G_2(A)$, et $h$ satisfait la condition de $2$-cocycle  (o\`u $G_1$ a $G_2$-action triviale)
	\begin{equation}\label{2-cocycle} h(g_2+g_2',g_2'')+h(g_2,g_2')=h(g_2,g_2'+g_2'')+h(g_2',g_2'')
	\end{equation}
	pour toute $k$-alg\`ebre $A$ et tous $g_2,g_2',g_2''\in G_2(A)$.
	On peut donc construire une extension centrale  de $k$-groupes (\ref{suite}): on pose $G:=G_1\times G_2$ comme $k$-sch\'ema, et on d\'efinit son op\'eration de groupe par
	\[(g_1,g_2)\cdot (g_1',g_2'):=(g_1+g_1'+h(g_2,g_2'),g_2+g_2').\]
	L'identit\'e de $G$ est $(0,0)$ et 
	\[(g_1,g_2)^{-1}=(-g_1-h(g_2,-g_2),-g_2).\]
	L'associativit\'e de cette operation est assur\'ee par (\ref{2-cocycle}), et le fait que $(0,0)$ est l'identit\'e suit de (\ref{zero-g2}). Le $k$-groupe $G$ est commutatif si et seulement si $h$ est sym\'etrique, c'est-\`a-dire $h(g_2,g_2')=h(g_2',g_2)$ pour toute $k$-alg\`ebre $A$ et tous $g_2,g_2'\in G_2(A)$.
	
	Pour montrer la proposition \ref{exemples}, il suffit alors de construire $G_1$ et $G_2$ lisses connexes unipotents de dimension $1$ sur $k$ avec $G_1(k)$ et $G_2(k)$ triviaux et un morphisme bi-additif $h:G_2\times G_2\to G_1$ qui n'est pas sym\'etrique.

	\medskip
	
	On commence par la construction de $G_1$ et $G_2$. Le premier exemple de groupe affine lisse et connexe $G$ sur un corps infini $k$ avec $G(k)$ fini est du \`a Rosenlicht \cite[p. 46]{rosenlicht1957some}. Le lemme suivant est une l\'eg\`ere g\'en\'eralisation de l'exemple de Rosenlicht, et sa preuve suit celle de J. Tits dans \cite[p. 207]{tits2013resumes}. On donnera d'autres exemples dans la remarque \ref{gabber}.
	
	\begin{lemma}\label{finite-group}
		Soient $k_0$ un corps de caract\'eristique $p>0$, $k:=k_0(t)$, $a,b\in k_0^{\times}$ et $m,n\geq 1$ des entiers tels que $p^m>2$. Consid\'erons le $k$-groupe $\G_{\on{a},k}^2$ avec cordonn\'ees $X$ et $Y$ et le $k$-groupe connexe lisse unipotent
		\[U:=\set{X=atX^{p^m}+bY^{p^n}}\subset \G_{\on{a},k}^2.\]
		Alors $U(k)=\set{(0,0)}$.
	\end{lemma}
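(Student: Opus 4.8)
L'ingr\'edient cl\'e sera de d\'eriver par rapport \`a $t$ la relation qui d\'efinit $U$. Munissons $k=k_0(t)$ de la $k_0$-d\'erivation $\partial=d/dt$, d\'etermin\'ee par $\partial(t)=1$, et notons $z'=\partial(z)$ pour $z\in k$. Soit $(x,y)\in U(k)$, de sorte que $x=atx^{p^m}+by^{p^n}$ dans $k$. Comme $a,b\in k_0$ et que $p$ divise $p^m$ et $p^n$, on a $\partial(x^{p^m})=\partial(y^{p^n})=0$, donc $\partial(atx^{p^m})=ax^{p^m}$ et $\partial(by^{p^n})=0$; en appliquant $\partial$ \`a la relation pr\'ec\'edente on trouvera
\[x'=ax^{p^m}.\]
Il suffira alors de montrer que cette \'egalit\'e entra\^ine $x=0$: on aura en effet $by^{p^n}=x-atx^{p^m}=0$, puis $y=0$ puisque $b\in k_0^{\times}$, d'o\`u $U(k)=\{(0,0)\}$. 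On supposera d\'esormais $x\neq0$ en vue d'une contradiction.

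On \'ecrira $x=P/Q$ sous forme r\'eduite, avec $P,Q\in k_0[t]$ non nuls, $Q$ unitaire et $\gcd(P,Q)=1$. En chassant les d\'enominateurs dans $x'=ax^{p^m}$ et en simplifiant par $Q^2$ (ce qui est licite puisque $p^m-2\geq0$), on obtiendra l'identit\'e polynomiale $(P'Q-PQ')Q^{p^m-2}=aP^{p^m}$, que l'on r\'e\'ecrira $P'Q^{p^m-1}=P\,(aP^{p^m-1}+Q'Q^{p^m-2})$. On en d\'eduira $P\mid P'Q^{p^m-1}$, donc $P\mid P'$ puisque $\gcd(P,Q)=1$; comme $\deg P'<\deg P$ lorsque $P'\neq0$, il viendra $P'=0$, et l'identit\'e se r\'eduira \`a
\[aP^{p^m-1}=-Q'Q^{p^m-2}.\]

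Il restera enfin \`a exploiter cette derni\`ere \'egalit\'e. Si l'on avait $\deg Q\geq1$, on aurait d'abord $Q'\neq0$ (sinon $P=0$), puis, $P^{p^m-1}$ divisant $aP^{p^m-1}=-Q'Q^{p^m-2}$ et $\gcd(P,Q)$ valant $1$, on en tirerait $P^{p^m-1}\mid Q'$, donc $(p^m-1)\deg P\leq\deg Q'$; mais la comparaison des degr\'es dans l'\'egalit\'e ci-dessus donne $(p^m-1)\deg P=\deg Q'+(p^m-2)\deg Q$, d'o\`u $(p^m-2)\deg Q\leq0$, ce qui est absurde puisque $p^m>2$ et $\deg Q\geq1$. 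Par cons\'equent $Q$ sera constant, $x=P\in k_0[t]$ et $x'=P'=0$; comme $x'=ax^{p^m}$ et $a\neq0$, on obtiendra $x^{p^m}=0$, puis $x=0$, contredisant l'hypoth\`ese. C'est dans l'in\'egalit\'e $(p^m-2)\deg Q\geq1$ qu'intervient de mani\`ere essentielle l'hypoth\`ese $p^m>2$, et c'est l\`a le point le plus d\'elicat de l'argument: l'\'equation $z'=az^{p^m}$ admet d'ailleurs des solutions non nulles dans $k_0(t)$ lorsque $p^m=2$ (par exemple $z=(c-at)^{-1}$ avec $c\in k_0$). Le reste n'est que manipulation \'el\'ementaire dans l'anneau factoriel $k_0[t]$.
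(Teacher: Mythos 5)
Your proof is correct and follows essentially the same route as the paper: differentiate the defining relation with respect to $t$ to reduce to $x'=ax^{p^m}$, then analyse a reduced fraction $x=P/Q$ in $k_0[t]$, using $p^m>2$ to force the denominator to be constant and a degree/derivative count to force $x=0$, hence $y=0$. The only difference is cosmetic (you establish $P'=0$ before showing $Q$ is constant, while the paper shows the denominator is a unit first and then compares degrees), so there is nothing to add.
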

	
	\begin{proof}
		Soient $x,y\in k$ tels que $x=atx^{p^m}+by^{p^n}$. On d\'erive 
		suivant la variable	$t$. Comme $\frac{d(f^p)}{dt}=0$ pour tout $f\in k$, cela nous donne  $\frac{dx}{dt}=ax^{p^m}$. Soient $u,v\in k_0[t]$ premiers entre eux tels que $x=\frac{u}{v}$. Alors $\frac{du}{dt}v-u\frac{dv}{dt}=\frac{u^{p^m}}{v^{p^m-2}}$. Comme $p^m>2$, on obtient $v\in k_0^{\times}$ et donc $x\in k_0[t]$. Si $x$ est non nul de degr\'e $d\geq 1$, l'\'egalit\'e $\frac{dx}{dt}=ax^{p^m}$ entra\^{i}ne $d-1=p^md$, ce qui est impossible, donc $x$ est constant, donc nul. Ceci montre que la solution $(0,0)$ est la seule.
	\end{proof}

	
	La construction suivante est due \`a Endo; voir \cite[Example 5.10]{kambayashi1974unipotent}. Historiquement, il s'agit du premier exemple d'un $k$-groupe unipotent ploy\'e non-commutatif de dimension $2$. Soient $k_0$ un corps de caract\'eristique $p>0$, $k:=k_0(t)$ et $1\leq m\leq n$ 
	des entiers. Consid\'erons le $k$-groupe $\G_{\on{a},k}^2$ avec cordonn\'ees $X$ et $Y$ et les $k$-groupes unipotents commutatifs connexes lisses $1$-dimensionnels
	\[G_1:=\set{X=tX^{p^m}+Y^{p^m}}\subset \G^2_{\on{a},k},\qquad G_2:=\set{X=-tX^{p^{2m}}+Y^{p^n}}\subset \G_{\on{a},k}^2.\]
	Posons
	\[h:G_2\times G_2\to G_1,\qquad h((x,y),(x',y')):=(x^{p^m}x'-x(x')^{p^m},x(y')^{p^{n-m}}-y^{p^{n-m}}x').\]
	A priori le codomaine de $h$ devrait \^{e}tre $\G_{\on{a},k}^2$, mais un calcul facile montre que l'image de $h$ est effectivement contenue dans $G_1$. On peut v\'erifier que $h$ est un morphisme bi-additif. Il n'est pas sym\'etrique si $p>2$. Comme $m,n\geq 1$, d'apr\`es le lemme \ref{finite-group} les groupes $G_1(k)$ et $G_2(k)$ sont triviaux. Ceci d\'emontre la proposition \ref{exemples} pour tout $p>2$.
	
	
	\medskip
	
	Pour conclure la d\'emonstration de la proposition \ref{exemples}, on doit traiter le cas $p=2$. On garde les notations pr\'ec\'edentes, et on suppose $p=2$ et $m$ impair. On commence en supposant $k_0=\F_2$, donc $k=\F_2(t)$. On \'ecrit $k':=\F_4(t)$ et on choisit $\zeta\in \F_4-\F_2$.  Consid\'erons le morphisme 
	\[f:G_2\to G_1,\qquad f(x,y):= (x^{2^m+1},xy^{2^{n-m}}).\]
	Un calcul direct montre
	\begin{equation}\label{h=df}
		h(g_2,g_2')=f(g_2+g_2')+f(g_2)+f(g_2').
	\end{equation}
	Comme $m$ est impair et $\zeta$ est une racine cubique primitive de $1$, $\zeta^{2^m}=\zeta^{-1}\neq\zeta$. On d\'eduit par des calculs \'el\'ementaires que le morphisme
	\[h_{\zeta}:(G_2)_{k'}\times (G_2)_{k'}\to (G_1)_{k'},\qquad h_{\zeta}(g_2,g'_2):= h(g_2,\zeta g'_2)\]
	est un morphisme bi-additif qui n'est pas sym\'etrique, et donc qui d\'efinit une extension non-commutative
	\begin{equation}\label{f4t}1\to (G_1)_{k'}\to G_{\zeta}\to (G_2)_{k'}\to 1\end{equation}
	sur $k'$. On va  montrer que (\ref{f4t}) est d\'efinie sur $k$ par un argument de descente inspir\'e par O. Gabber \cite[Example 2.10]{conrad2015structure}. Soit $\sigma$ l'\'el\'ement non-trivial de $\on{Gal}(\F_4/\F_2)$. Comme $\sigma(\zeta)=\zeta^{-1}=\zeta+1$, on a une identification $\sigma^*(G_{\zeta})=G_{\zeta+1}$. D'autre part, en utilisant le fait que $\zeta^{-1}=\zeta+1$, on peut v\'erifier que \[h_{\zeta+1}=h_{\zeta}+h_{k'},\] et un calcul direct utilisant (\ref{h=df}) montre alors que l'on a un isomorphisme d'extensions
	\[G_{\zeta}\xrightarrow{\sim} G_{\zeta+1},\qquad (g_1,g_2)\mapsto (g_1+f(g_2),g_2).\]
	On a donc construit un isomorphisme $\phi:G_{\zeta}\to \sigma^*(G_{\zeta})$. On v\'erifie que le compos\'e $\sigma^*(\phi)\circ\phi:G_{\zeta}\to G_{\zeta}$ est l'identit\'e, c'est-\`a-dire $\phi$ d\'efinit une donn\'ee de descente relative \`a l'extension $k'/k$. On obtient une extension $G$ de $G_2$ par $G_1$ d\'efinie sur $k$ et qui donne (\ref{f4t}) par changement de base \`a $k'$. D'apr\`es le lemme \ref{finite-group}, $G_1(k)$ et $G_2(k)$ sont triviaux. Ceci d\'emontre la proposition \ref{exemples} si $k_0=\F_2$. Si $k_0$ est arbitraire, encore par le lemme \ref{finite-group} on obtient l'exemple cherch\'e sur $k=k_0(t)$ \`a partir de celui sur $\F_2(t)$ par changement de base de $\F_2(t)$ \`a $k$. On a donc d\'emontr\'e la proposition \ref{exemples}.

	\begin{rmk}\label{gabber}
		La construction suivante, due \`a Gabber, a d\'ej\`a paru dans la litt\'era\-ture; voir \cite[Example 2.10]{conrad2015structure} et \cite[\S 2]{rosengarten2021pathological}. Soit $k=k_0(t)$, o\`u $t$ est transcendant sur $k_0$. Consid\'erons le $k$-groupe $\G_{\on{a},k}^2$ avec  coordonn\'ees
		$X$ et $Y$ et les $k$-groupes unipotents commutatifs connexes lisses $1$-dimensionnels
		\[G_1:=\set{X=-X^p-tY^p}\subset \G_{\on{a},k}^2,\qquad G_2:=\set{X=X^{p^2}+tY^{p^2}}\subset \G_{\on{a},k}^2.\]
		On veut montrer que $G_1(k)$ et $G_2(k)$ sont finis pour tout $p>2$. Plus en g\'en\'eral, on montrera que pour tout  $k$-groupe	
		\[V:=\set{X=aX^{p^m}+btY^{p^n}}\subset \G_{\on{a},k}^2,\] 
		o\`u $m$ est un entier tel que $p^m>2$, $n\geq 1$ et $a,b\in k_0^{\times}$, le groupe $V(k)$ est fini. Soient $x,y\in k$ tels que $x=ax^{p^m}+bty^{p^n}$. On \'ecrit cela comme $(x-ax^{p^m})\frac{1}{t}=by^{p^n}$ et on d\'erive 
		suivant la variable $t$. Cela nous donne $\frac{dx}{dt}t=x-ax^{p^m}$. Soient $u,v\in k_0[t]$ premiers entre eux tels que $x=\frac{u}{v}$. Alors $(\frac{du}{dt}v-u\frac{dv}{dt})t=uv+a\frac{u^{p^m}}{v^{p^m-2}}$. Comme $p^m>2$, on obtient $v\in k_0^{\times}$ et donc $x\in k_0[t]$. Si $x$ est de degr\'e $d\geq 1$, l'\'egalit\'e $\frac{dx}{dt}t=x-ax^{p^m}$ entra\^{i}ne $d=p^md$, ce qui est impossible, donc $x\in k_0$. Comme $ty^{p^n}=x-ax^{p^m}\in k_0$, on d\'eduit $y=0$ et $x-ax^{p^m}=0$. Ceci montre que $V(k)$ est fini; en particulier $G_1(k)$ et $G_2(k)$ sont finis.
		
		Soit
		\[h:G_2\times G_2\to G_1,\qquad h((x,y),(x',y')):=(x(x')^p-x^px',x(y')^p-x'y^p)\]
		Des calculs faciles montrent que l'image de $h$ est effectivement contenue dans $G_1$ et que $h$ est un morphisme bi-additif. Il est non-sym\'etrique si $p>2$. Le $k$-groupe $G$ associ\'ee \`a $h$ comme ci-dessus est donc non-commutatif si $p>2$. Ceci donne une d\'emonstration alternative de la proposition \ref{construction} (avec $G_1(k)$ et $G_2(k)$ non n\'ecessairement triviaux mais finis) pour $p>2$.
		
		Si $p=2$, l'exemple ci-dessus est commutatif. Par un argument de descente on peut obtenir des extensions $G$ de $G_2$ par $G_1$ telles que $G$ est non-commutatif; voir encore \cite[Example 2.10]{conrad2015structure}. D'autre part, $G_1\subset \A^2_{k}$ est une conique avec un point $k$-rationnel, donc birationnelle \`a $\P^1_{k}$. On d\'eduit que $G_1(k)/R=1$.
		Pour toute extension centrale $G$ de $G_2$ par $G_1$ obtenue selon la proc\'edure d\'ecrite au d\'ebut de cette section, on a $G=G_1\times G_2$ comme $k$-vari\'et\'es et alors \[G(K)/R=G_1(K)/R\times G_2(K)/R=G_2(K)/R\] pour toute extension $K/k$.  Le groupe $G(K)/R$ est donc commutatif pour toute extension $K/k$, c'est-\`a-dire on ne peut pas utiliser $G$ pour donner une r\'eponse n\'egative \`a la question \ref{question1} dans le cas $p=2$.
	\end{rmk}

	\section{D\'emonstration du th\'eor\`eme \ref{mainthm}}

	On d\'emontre le th\'eor\`eme \ref{mainthm}. Soient $G_1$, $G_2$ et $G$ comme dans la proposition \ref{exemples}. En particulier, les groupes $G_1(k)$, $G_2(k)$ et $G(k)$ sont triviaux. Comme $k$ est infini et $G_1$ et $G_2$ ont au plus un nombre fini de points $k$-rationnels, ils ne sont pas $k$-unirationnels. Le lemme \ref{no-rat-map} donne alors $G(K)/R=G(K)$ pour toute extension s\'eparable $K/k$. 
	
	Soit $k_{\on{sep}}$ la cl\^{o}ture s\'eparable de $k$. Comme $G$ est lisse sur $k$, $G(k_{\on{sep}})\subset G_{k_{\on{sep}}}$ est dense. Comme $G$ est non-commutatif, le groupe $G(k_{\on{sep}})$ n'est pas commutatif. Le groupe $G(k_{\on{sep}})$ est la r\'eunion des $G(F)$, o\`u $F/k$ est finie et s\'eparable. On peut alors trouver $F/k$ finie et s\'eparable telle que $G(F)$ n'est pas commutatif. Ceci d\'emontre (1).
	
	Comme l'extension de corps $k(\!(u)\!)/k$ est s\'eparable, l'homomorphisme $G(k)/R\to G(k(\!(u)\!))/R$ s'identifie \`a $G(k)\to G(k(\!(u)\!))$. Pour montrer (2) il suffit alors de prouver que le groupe $G(k(\!(u)\!))$ est non-trivial. Ce groupe est m\^eme dense dans $G_{k(\!(u)\!)}$: ceci suit du fait que $k(\!(u)\!)$ est un corps fertile (``large field'' en anglais), $G$ est irr\'eductible et lisse et $G(k(\!(u)\!))\neq\emptyset$. Ceci prouve (2) et conclut la d\'emonstration du th\'eor\`eme \ref{mainthm}.

	\begin{rmk}\label{homotopie}
		Soient $k$ un corps infini, $X$ une $k$-vari\'et\'e et $k(u)/k$ une extension pure de degr\'e de transcendance $1$. Comme on n'a pas trouv\'e de r\'ef\'erence dans la litt\'erature, on donne ici la preuve du fait bien connu suivant, qui avec le r\'esultat de Gille \cite[Corollaire 0.3]{gille2004specialisation} motive la question \ref{question3}: {\em l'application naturelle $X(k)/R\to X(k(u))/R$ est bijective.}
		
		On commence par montrer que cette fl\`eche est injective. Notons $v$ la cordonn\'ee sur $\P^1_{k(u)}$ et $k(u,v)/k(u)$ le corps de fonctions de $\P^1_{k(u)}$.  Si $x,y\in X(k)$ sont $R$-\'equivalents dans $X(k(u))$, il existe $z_1(u),\dots,z_n(u)\in X(k(u))$ tels que $z_1(u)=x$ et $z_n(u)=y$, et des $k(u)$-applications rationnelles $\phi_i=\phi_i(u,v):\P^1_{k(u)}\dashrightarrow X_{k(u)}$ d\'efinies en $v=0,1$ telles que $\phi_i(u,0)=z_i(u)$ et $\phi_i(u,1)=z_{i+1}(u)$ pour $i=1,\dots,n-1$. Comme $k$ est infini, il existe $u_0\in k$ tel que les $z_i(u)$ et les $\phi_i(u,v)$ se sp\'ecialisent en $u=u_0$. Par sp\'ecialisation en $u=u_0$ on obtient une $R$-\'equivalence entre $x$ et $y$ dans $X(k)$. Ceci montre l'injectivit\'e de $X(k)/R\to X(k(u))/R$. 
		
		On passe \`a  la surjectivit\'e. Soit $z=z(u):\A^1_k\dashrightarrow X$ un $k(u)$-point de $X$. Soit $Z\subset \A^1_k$ le compl\'ementaire de l'ouvert de d\'efinition de $z(u)$; on peut supposer que $0$ n'appartient pas \`a $Z$, c'est-\`a-dire $z(u)$ est d\'efinie en $u=0$. Si on note par $u$ et $v$ les cordonn\'ees de $\A^2_k$, on obtient une $k$-application rationnelle $\phi:\A^2_k\dashrightarrow X$ en posant $\phi(u,v):=z(uv)$, c'est-\`a-dire $\phi:=z\circ\psi$, o\`u $\psi:\A^2_k\to \A^1_k$ est donn\'e par $\psi(u,v)=uv$. Alors $\phi$ est d\'efinie en d\'ehors du ferm\'e $\psi^{-1}(Z)$, donc son lieu de d\'efinition a intersection non-vide avec la droite $v=0$ et la droite $v=1$. On a $\phi(u,1)=z(u)$ et $\phi(u,0)=z(0)$ dans $X(k(u))$. Si on pose $\phi':=\phi_{k(u)}:\P^1_{k(u)}\dashrightarrow X_{k(u)}$, ceci montre que $z(u)$ et $z(0)$ sont $R$-li\'es par $\phi'$. Comme $z(0)$ est dans l'image de $X(k)\to X(k(u))$, on d\'eduit que $z(u)$ est dans l'image de $X(k)/R\to X(k(u))/R$. On conclut que $X(k)/R\to X(k(u))/R$ est surjective.
	\end{rmk}

\end{document}